\theoremstyle{plain}
 \newtheorem{theorem}{Theorem}[section]
 \newtheorem{lemma}{Lemma}[section]
 \newtheorem{proposition}{Proposition}[section]
\theoremstyle{definition}
\DeclareMathOperator{\Impart}{Im}
\newcommand{\Z}{\mathbb{Z}}
\newcommand{\Q}{\mathbb{Q}}
\newcommand{\C}{\mathbb{C}}
\title[An expl.\ Andr\'e-Oort type result for $\mathbb{P}^1(\mathbb{C}) \times \mathbb{G}_m(\mathbb{C})$ based on log.\ forms]{An explicit Andr\'e-Oort type result for $\mathbb{P}^1(\mathbb{C}) \times \mathbb{G}_m(\mathbb{C})$ based on logarithmic forms}
\author{Roland Paulin}
\address{Roland Paulin, Department of Mathematics, University of Salzburg, Hellbrunnerstr.\ 34/I, 5020 Salzburg, Austria}
\email{paulinroland@gmail.com}
\thanks{The author was supported by the Austrian Science Fund (FWF): P24574.}
\subjclass[2010]{Primary 11G18; Secondary 11J86}
\keywords{Andr\'e-Oort conjecture, singular moduli, roots of unity, linear forms in logarithms}
\date{\today}
\begin{document}

\begin{abstract}
Using linear forms in logarithms we prove an explicit result of Andr\'e-Oort type for $\mathbb{P}^1(\C) \times \mathbb{G}_m(\C)$.
In this variation the special points of $\mathbb{P}^1(\C) \times \mathbb{G}_m(\C)$ are of the form $(\alpha, \lambda)$, with $\alpha$ a singular modulus and $\lambda$ a root of unity.
The qualitative version of our result states that if $\mathcal{C}$ is a closed algebraic curve in $\mathbb{P}^1(\C) \times \mathbb{G}_m(\C)$, defined over a number field, not containing a horizontal or vertical line, then $\mathcal{C}$ contains only finitely many special points.
The proof is based on linear forms in logarithms.
This differs completely from the method used by the author recently in the proof of the same kind of statement, where class field theory was applied.
\end{abstract}

\maketitle

\section{Introduction and result} \label{sec:Intro}
K\"uhne in \cite{Kuehne:An_effective_result_of_Andre-Oort_type} and independently Bilu, Masser and Zannier in \cite{Bilu-Masser-Zannier} have studied the Andr\'e-Oort conjecture in the case of the Shimura variety $\mathbb{P}^1(\C) \times \mathbb{P}^1(\C)$, where $\mathbb{P}^1(\C)$ is the modular curve $\operatorname{SL}_2(\Z) \backslash \mathcal{H}^*$.
They obtain the first nontrivial, unconditional, effective results in the area.
In \cite{Paulin:Andre-Oort-P1xGm} the author investigates the variant $\mathbb{P}^1(\C) \times \mathbb{G}_m(\C)$, where the special points are of the form $(j(\tau), \lambda)$, with $\tau$ an imaginary quadratic number and $\lambda \in \C$ a root of unity.

In this article we attack the same problem as in \cite{Paulin:Andre-Oort-P1xGm}, using a different method.
We prove a weaker version of the main explicit result of \cite{Paulin:Andre-Oort-P1xGm}, therefore we also reprove the main non-effective result.
The better bounds in \cite{Paulin:Andre-Oort-P1xGm} are achieved using more sophisticated class field theory.
We use less class field theory here, and instead are forced to apply linear forms in logarithms.
Even though the bounds are worse here, the methods presented could still be useful for other similar problems.

Let $\mathcal{H}$ denote the complex upper half-plane.
We call $(\alpha, \lambda) \in \mathbb{P}^1(\C) \times \mathbb{G}_m(\C)$ a special point, if $\alpha = j(\tau)$ for some imaginary quadratic $\tau \in \mathcal{H}$ and $\lambda \in \C$ is a root of unity.
We work with the same assumptions as in Theorem 2 of \cite{Paulin:Andre-Oort-P1xGm}.
So $K$ is a number field of degree $d$ over $\Q$ with a fixed embedding into $\C$, and $F \in K[X,Y]$ is a nonconstant polynomial with $\delta_1 = \deg_X F$ and $\delta_2 = \deg_Y F$.
We assume that zero set of $F(X,Y) = 0$ contains no vertical or horizontal line, i.e.\ $F(X,Y)$ does not have a nonconstant divisor $f \in K[X]$ or $g \in K[Y]$.
Then clearly $\delta_1, \delta_2 > 0$.
Let $h(F)$ denote the height of the polynomial $F$ (so $h(F)$ is the absolute logarithmic Weil height of the point defined by the nonzero coefficients of $F$ in projective space, see the definition in section \ref{sec:Prelim}).
Let $(\alpha, \lambda)$ be a special point of $\mathcal{C}$, where $\alpha = j(\tau)$ for some $\tau \in \mathcal{H}$.
Let $\Delta$ denote the discriminant of the endomorphism ring of the complex elliptic curve $\C/(\Z + \Z \tau)$, and let $N$ be the smallest positive integer such that $\lambda^N = 1$.

\begin{theorem} \label{thm:main}
In the above situation
\begin{equation} \label{eq:main-Q-Delta}
|\Delta| < \left(\frac{1}{d \delta_2} C \log C\right)^2
\end{equation}
and
\begin{equation} \label{eq:main-Q-N}
N < C (\log C)^2 \log \log C
\end{equation}
with
\[
C = 2^{36} d^3 \delta_2^3 (\log(4 d \delta_2))^2 \max \left(d h(F) + (d-1) (\delta_1 + \delta_2) \log 2, 1\right).
\]
\end{theorem}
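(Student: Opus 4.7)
The plan is to turn the single algebraic relation $F(\alpha,\lambda)=0$, via the $q$-expansion of $j$ and a Galois-theoretic choice of conjugate, into a linear form in the logarithms of algebraic numbers whose smallness (from modular analysis) clashes with an explicit Baker-type lower bound. First I would use that $F\in K[X,Y]$, so every $K$-conjugate of $(\alpha,\lambda)$ is again a special point of $\mathcal{C}$: the $K$-orbit of $\alpha$ contains the $j$-values $j(\tau_f)$ attached to reduced binary quadratic forms of discriminant $\Delta$, while the $K$-orbit of $\lambda$ consists of primitive $N$-th roots of unity. I would replace $(\alpha,\lambda)$ by a conjugate $(\alpha^\ast,\lambda^\ast)$ with $\alpha^\ast=j(\tau^\ast)$ for $\tau^\ast=(b+\sqrt{\Delta})/(2a)$ coming from a reduced form whose first coefficient $a$ is as small as possible (always $a\le\sqrt{|\Delta|/3}$, typically $a=1$); then $\Impart(\tau^\ast)\ge\sqrt{|\Delta|}/(2a)$ and the classical expansion $j(\tau)=e^{-2\pi i\tau}+744+O(e^{2\pi i\tau})$ yields $|\alpha^\ast|\gtrsim e^{\pi\sqrt{|\Delta|}/a}$.

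Writing $F(X,Y)=\sum_{i=0}^{\delta_1}c_i(Y)X^i$ with $c_i\in K[Y]$ of degree $\le\delta_2$, and substituting $(\alpha^\ast,\lambda^\ast)$, isolation of the leading $X^{\delta_1}$-term forces
\[
|c_{\delta_1}(\lambda^\ast)|\ \le\ (\delta_1+1)\,e^{h(F)+O(\log\delta_2)}\,|\alpha^\ast|^{-1}\ \le\ e^{-\pi\sqrt{|\Delta|}/a+O(h(F)+\log\delta_2)}.
\]
If $c_{\delta_1}(\lambda^\ast)=0$, then $\lambda^\ast$ is a root of the fixed polynomial $c_{\delta_1}(Y)\in K[Y]$ of degree $\le\delta_2$; but the $\Gal(K(\lambda^\ast)/K)$-orbit of $\lambda^\ast$ contains at least $\varphi(N)/d$ primitive $N$-th roots of unity, all of them roots of $c_{\delta_1}$, so $\varphi(N)\le d\delta_2$ and a rough $N$-bound of the required shape follows at once. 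Henceforth we may assume $c_{\delta_1}(\lambda^\ast)\ne 0$ and look for a matching lower bound.

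This is where linear forms in logarithms enter. I would factor $c_{\delta_1}(Y)=c\prod_k(Y-\gamma_k)$ over $\overline{\Q}$, with each $\gamma_k$ of controlled height $h(\gamma_k)\le h(F)+(d-1)(\delta_1+\delta_2)\log 2$ and degree $\le d\delta_2$ over $\Q$. Writing $\lambda^\ast=e^{2\pi i m/N}$ with $\gcd(m,N)=1$ and choosing integers $\ell_k$ optimally, each distance $|\lambda^\ast-\gamma_k|$ is comparable to
\[
|\Lambda_k|\ =\ \bigl|\,m\cdot(2\pi i/N)-\log\gamma_k+2\pi i\ell_k\,\bigr|,
\]
a linear form in the two logarithms $\log\gamma_k$ and $\log(-1)$, nonzero since $\gamma_k\ne\lambda^\ast$. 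An explicit effective estimate (Laurent--Mignotte--Nesterenko for two logarithms, or Matveev when $\gamma_k$ happens to be a root of unity) gives $|\Lambda_k|\ge\exp\bigl(-\kappa(\log N)(\log H_k)\bigr)$ with $H_k\le e^{h(F)+O(\log\delta_2)}$. Multiplying over $k$ and comparing with the upper bound produces an inequality of the shape $\pi\sqrt{|\Delta|}/a\le C'\log N+C''$, whence \eqref{eq:main-Q-Delta} is read off. A parallel argument, using a Galois conjugate $\lambda_0$ of $\lambda$ with $|\lambda_0-1|\sim 2\pi/N$ together with the Taylor expansion of $F(\alpha_0,Y)$ around $Y=1$ and an LFL lower bound on $|F(\alpha_0,1)|$, yields \eqref{eq:main-Q-N} by the same scheme with the roles of $X$ and $Y$ interchanged.

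The hard part will be the precise numerical bookkeeping in the LFL estimates: matching the constant $2^{36}$ and the exact polynomial dependence on $d,\delta_1,\delta_2,h(F)$ appearing in $C$ requires a carefully chosen quantitative version of Baker's theorem, and careful control of the comparison $|\lambda^\ast-\gamma_k|\asymp|\Lambda_k|$ when $\gamma_k$ lies very close to the unit circle near $\lambda^\ast$. One also has to handle the edge case where no reduced form of discriminant $\Delta$ has first coefficient $a=1$, which weakens the analytic lower bound $|\alpha^\ast|\gtrsim e^{\pi\sqrt{|\Delta|}/a}$ by the factor $a$; fortunately $a\le\sqrt{|\Delta|/3}$ always holds, which is just enough to close the argument.
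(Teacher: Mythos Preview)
Your outline for bounding $|\Delta|$ is close to the paper's, but the proposed route to \eqref{eq:main-Q-N} has a genuine gap. The ``parallel argument'' does not close: picking $\lambda_0$ with $|\lambda_0-1|\sim 2\pi/N$ forces $\alpha_0$ to be whichever conjugate of $\alpha$ the Galois action pairs with $\lambda_0$, and you have no control over $|\alpha_0|$. The Taylor bound then reads $|F(\alpha_0,1)|\lesssim N^{-1}H(F)\,|\alpha_0|^{\delta_1}$, while $F(\alpha_0,1)$ carries no natural linear-form-in-logarithms structure; any lower bound for this algebraic number (Liouville or otherwise) involves the height and degree of $\alpha_0$, both governed by $|\Delta|$ and $h(\Delta)$. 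One obtains at best $\log N\lesssim\delta_1\sqrt{|\Delta|}\log|\Delta|$, which together with $\sqrt{|\Delta|}\lesssim C'\log N$ from the first half is circular. The paper supplies the missing coupling by an entirely different mechanism: from $F(\alpha,\lambda)=0$ and $F(\alpha,Y)\not\equiv 0$ one has $[\Q(\alpha,\lambda):\Q(\alpha)]\le\delta_2$, hence $\varphi(N)\le\delta_2[\Q(\alpha):\Q]=\delta_2\,h(\Delta)$; then the elementary class-number bound $h(\Delta)<\pi^{-1}\sqrt{|\Delta|}(2+\log|\Delta|)$ and the estimate $\varphi(N)>N/(3\log\log N)$ give $N/(\log\log N)\lesssim\delta_2\sqrt{|\Delta|}\log|\Delta|$. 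This inequality and $\sqrt{|\Delta|}\lesssim C'\log N$ form a genuinely solvable two-variable system (isolated in the paper as a calculus lemma), and both \eqref{eq:main-Q-Delta} and \eqref{eq:main-Q-N} drop out simultaneously. Your proposal contains nothing to replace this degree/class-number input.

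On the $|\Delta|$ side, two refinements in the paper smooth out issues you flag as hard. First, the paper reduces at the outset (via \cite{Paulin:Andre-Oort-P1xGm}) to $K=\Q$ with $\Z+\Z\tau$ equal to the order of discriminant $\Delta$; this gives $\Impart\tau=\sqrt{|\Delta|}/2$ directly, and your edge case $a>1$ disappears (in fact the principal form always has $a=1$, so even without the reduction there is no such edge case over $\Q$). Second, instead of the top coefficient $c_{\delta_1}(\lambda)$ with a separate treatment of $c_{\delta_1}(\lambda)=0$ (which in your sketch bounds $N$ but leaves $|\Delta|$ unaddressed), the paper uses $g_m(\lambda)$ for the largest $m$ with $g_m(\lambda)\ne 0$, and the same upper bound $|g_m(\lambda)|\lesssim H(F)/|\alpha|$ goes through. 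Finally, applying Baker--W\"ustholz to every linear factor $Y-\gamma_k$ as you suggest costs an extra factor of $\delta_2$ in the final constant; the paper's lower bound for $|g(\lambda)|$ instead uses a root-separation inequality (via the Mahler measure) for all roots of each $\Z$-irreducible factor except the one nearest $\lambda$, invoking LFL only once per factor, which is what brings the exponent of $\delta_2$ in $C$ down to~$3$.
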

This result implies Theorem 1 of \cite{Paulin:Andre-Oort-P1xGm}, because there are only finitely many special points $(\alpha, \lambda)$ with $\Delta$ and $N$ bounded.
Similarly to \cite{Paulin:Andre-Oort-P1xGm}, we can reduce the proof to the case when $K = \Q$ and $\Z + \Z\tau$ is an order.

In the following section we collect some preliminary definitions and statements, and also prove some auxiliary results.
In section \ref{sec:proof} we prove Theorem \ref{thm:main}.

\section{Preliminaries} \label{sec:Prelim}

For the readers convenience we recall some definitions.
The (absolute logarithmic Weil) height of a point $P = (a_0: \dotsc: a_n) \in \mathbb{P}^n_{\overline{\Q}}$ is defined by
\[
h(P) = \sum_{v \in M_K} \frac{[K_v:\Q_v]}{[K:\Q]} \log(\max_i |a_i|_v),
\]
where $K$ is any number field containing all $a_i$, $M_K$ is the set of places of $K$, and for any place $v$, $|\cdot|_v$ is the absolute value on $K$ extending a standard absolute value of $\Q$.
Similarly, the (absolute logarithmic Weil) height of a polynomial $F \in \overline{\Q}[X_1, \dotsc, X_n]$ with nonzero coefficients $c_i$ is defined by
\[
h(F) = \sum_{v \in M_K} \frac{[K_v:\Q_v]}{[K:\Q]} \log(\max_i |c_i|_v),
\]
where $K$ is a number field containing the coefficients of $F$.
We use the notation $H(F) = e^{h(F)}$.
If $F \in \Z[X_1, \dotsc, X_n]$, and the gcd of the coefficients of $F$ is $1$, then $H(F)$ is equal to the maximum of the euclidean absolute values of the coefficients of $F$.

If $K$ is a number field and $\alpha \in K$, then the (absolute logarithmic Weil) height of $\alpha$ is
\[
h(\alpha) = h(\alpha:1) = \sum_{v \in M_K} \frac{[K_v:\Q_v]}{[K:\Q]} \log \max(1, |\alpha|_v).
\]
We use the notation $H(\alpha) = e^{h(\alpha)}$.

Let $f = a_d X^d + \dotsm + a_0 = a_d (X-\alpha_1) \dotsm (X-\alpha_d) \in \C[X]$, where $a_d \neq 0$.
The Mahler measure of $f$ (see e.g.\ \cite{Bombieri-Gubler}) is defined by
\[
M(f) = \exp\left(\int_0^1 \log|f(e^{2\pi i t})|dt\right) = |a_d| \prod_{j=1}^d \max(1, |\alpha_j|).
\]
If $\alpha \in \overline{\Q}$ and $f \in \Z[X]$ is the minimal polynomial of $\alpha$, then $h(\alpha) = \frac{\log M(f)}{\deg f}$ (see Proposition 1.6.6 in \cite{Bombieri-Gubler}).

If $\mathcal{O}$ is an order in an imaginary quadratic number field $L$, then the class number $h(\mathcal{O})$ denotes the number of equivalence classes of proper fractional ideals of $\mathcal{O}$ (see e.g.\ \cite{Cox:Primes_of_the_form_x2+ny2}).
Since $\mathcal{O}$ is an order in $L$, we can write it in the form $\Z + \Z \tau_0$ for some $\tau_0$ in $L \cap \mathcal{H}$.
Then the discriminant of the order $\mathcal{O}$ is $D(\mathcal{O}) = -4 (\Impart \tau_0)^2$ (see e.g.\ \S 7, Ch.\ 2 in \cite{Cox:Primes_of_the_form_x2+ny2}).
This is a negative integer congruent to $0$ or $1$ modulo $4$.

If $D<0$ is an integer congruent to $0$ or $1$ modulo $4$, then the class number $h(D)$ denotes the number of proper equivalence classes of primitive quadratic forms of discriminant $D$ (see \cite{Cox:Primes_of_the_form_x2+ny2} or \cite{Hua:Intro_to_Number_Theory}).
Theorem 7.7 in \cite{Cox:Primes_of_the_form_x2+ny2} says that if $\mathcal{O}$ is an order with discriminant $D$ in an imaginary quadratic number field, then $h(\mathcal{O}) = h(D)$.

The following theorem is the main result of \cite{BaW:Log_forms_and_group_varieties}.
\begin{theorem}[Baker, W\"ustholz] \label{thm:BaWu}
Let $\alpha_1, \dotsc, \alpha_n \in \C \setminus \{0,1\}$ be algebraic numbers, with fixed determinations of logarithms $\log \alpha_1, \dotsc, \log \alpha_n$.
The degree of the field extension $\Q(\alpha_1, \dotsc, \alpha_n)/\Q$ is denoted by $d$.
Let $L(z_1, \dotsc, z_n) = b_1 z_1 + \dotsm + b_n z_n$ be a linear form, where $b_1, \dotsc, b_n$ are integers such that at least one $b_i$ is nonzero.
We use the notation $h'(\alpha_i) = \max(h(\alpha_i), \frac{1}{d} |\log \alpha_i|, \frac{1}{d})$ (which depends on the choice of $\log \alpha_i$) and $h'(L) = \max(h(L),\frac{1}{d})$, where $h(L)$ is the absolute logarithmic Weil height of $(b_1:\dotsc:b_n)$ in $\mathbb{P}^{n-1}_{\Q}$.
If $\Lambda = L(\log \alpha_1, \dotsc, \log \alpha_n) \neq 0$, then
\[
\log |\Lambda| > -C(n,d) h'(\alpha_1) \dotsm h'(\alpha_n) h'(L),
\]
where
\[
C(n,d) = 18 (n+1)! n^{n+1} (32d)^{n+2} \log(2nd).
\] 
\end{theorem}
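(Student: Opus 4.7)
The plan is to combine the $q$-expansion of the modular $j$-function with Theorem~\ref{thm:BaWu}. Following the paper's indicated reduction step, I first assume $K = \mathbb{Q}$, $F \in \mathbb{Z}[X,Y]$, and $\mathbb{Z} + \mathbb{Z}\tau$ an order of discriminant $\Delta$. Since $F(\sigma\alpha,\sigma\lambda) = 0$ for every $\sigma \in \mathrm{Gal}(\overline{\mathbb{Q}}/\mathbb{Q})$ and the Galois orbit of $j(\tau)$ realises the $j$-values at all form classes of discriminant $\Delta$, I replace $(\alpha,\lambda)$ by a suitable conjugate and assume $\tau$ represents the principal form, so $\mathrm{Im}(\tau) = \tfrac12\sqrt{|\Delta|}$ and $|q| = e^{-\pi\sqrt{|\Delta|}}$ for $q = e^{2\pi i\tau}$. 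Writing $F(X,Y) = \sum_{i=0}^{\delta_1}F_i(Y)X^i$ with $F_i \in \mathbb{Z}[Y]$ of $Y$-degree at most $\delta_2$ and $F_{\delta_1}\not\equiv 0$, the expansion $j(\tau) = q^{-1} + 744 + O(q)$ gives $\alpha^i = q^{-i}(1+O(q))$ with effective constants, so substituting into $F(\alpha,\lambda) = 0$ and multiplying by $q^{\delta_1}$ extracts
\[
|F_{\delta_1}(\lambda)| \;\le\; C_1\, e^{-\pi\sqrt{|\Delta|}},
\]
with $C_1$ polynomial in $H(F), \delta_1, \delta_2$.

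To match this from below via Theorem~\ref{thm:BaWu}, assume first $F_{\delta_1}(\lambda) \ne 0$ and factor $F_{\delta_1}(Y) = c\prod_k(Y-\beta_k)$ in $\overline{\mathbb{Q}}[Y]$, so $|F_{\delta_1}(\lambda)| = |c|\prod_k|\lambda-\beta_k|$. For each $\beta_k \notin\{0,1\}$ let $\Lambda_k = \log\lambda - \log\beta_k - 2m_k\log(-1)$ be the principal branch of $\log(\lambda/\beta_k)$; this is nonzero since $\lambda\ne\beta_k$, and with principal determinations throughout one checks that $m_k \in\{-1,0,1\}$. The key device is to eliminate $\log\lambda$ by multiplying through by $N$: since $N\log\lambda = 2k\log(-1)$ when $\lambda = e^{2\pi i k/N}$,
\[
N\Lambda_k \;=\; -N\log\beta_k + 2(k-Nm_k)\log(-1)
\]
is a nonzero $\mathbb{Z}$-linear form in only the two logarithms $\log\beta_k, \log(-1)$, with all data living in the number field $\mathbb{Q}(\beta_k)$ of degree at most $\delta_2$, crucially bypassing the much larger field $\mathbb{Q}(\lambda)$ of degree $\phi(N)$. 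Applying Theorem~\ref{thm:BaWu} with $n=2$ and this field, using $h(\beta_k) \le h(F)+\log 2$, $h(-1) = 0$, and $h'(L) \le \log(4N)$, yields $\log|\Lambda_k| \ge -C_2\,\delta_2^{3}\log(4\delta_2)\,h(\beta_k)\,\log N$ for an effective constant $C_2$. Using $|1 - e^{\Lambda_k}|\ge\tfrac12|\Lambda_k|$ for small $|\Lambda_k|$, multiplying the resulting estimates over the at most $\delta_2$ roots, and invoking the standard inequality $\sum_k h(\beta_k) \ll h(F)+\log\delta_2$ (from comparing the sum of heights of the roots with the Mahler measure of $F_{\delta_1}$, so no extra factor of $\delta_2$ appears), I obtain $\log|F_{\delta_1}(\lambda)| \ge -C_3\,\delta_2^{3}\log(4\delta_2)\,h(F)\,\log N$. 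Comparing with the upper bound gives $\sqrt{|\Delta|}$ bounded polynomially in $\delta_2, h(F)$ and logarithmically in $N$.

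The bound~\eqref{eq:main-Q-N} then follows by combining the foregoing with $\phi(N) \le \delta_2\,h(\Delta)$ (since $\lambda$ is a root of $F(\alpha,Y) \in \mathbb{Q}(\alpha)[Y]$ of $Y$-degree $\le\delta_2$ and $[\mathbb{Q}(\alpha):\mathbb{Q}] = h(\Delta)$), the classical estimate $h(\Delta)\ll\sqrt{|\Delta|}\log|\Delta|$, and $N\ll\phi(N)\log\log N$; a brief bootstrapping step resolves the implicit $\log N$ on the right-hand side of the $|\Delta|$-bound, yielding absolute bounds of the shapes~\eqref{eq:main-Q-Delta} and~\eqref{eq:main-Q-N}. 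Reinstating the $d$-dependence via the reduction $K \to \mathbb{Q}$ (which replaces $(\delta_1, \delta_2, h(F))$ by $(d\delta_1, d\delta_2, dh(F) + (d-1)(\delta_1+\delta_2)\log 2)$) supplies the $d^3$ factor and the height shift in $C$. The main obstacles I foresee are: (i) careful tracking of constants to land on the precise polynomial form of $C$, in particular the $(\log(4d\delta_2))^2$ logarithmic factor; (ii) handling the exceptional case $F_{\delta_1}(\lambda) = 0$, which I address by replacing $F_{\delta_1}$ with $F_{i_0}$ for the largest $i_0$ satisfying $F_{i_0}(\lambda)\ne 0$ (such $i_0$ exists thanks to the no-vertical-line hypothesis) and rerunning the $q$-expansion with a shifted leading power of $q$; and (iii) disposing of the excluded values $\beta_k\in\{0,1\}$ directly via $|\lambda| = 1$ when $\beta_k = 0$ and $|\lambda-1|\ge 4/N$ when $\beta_k = 1$.
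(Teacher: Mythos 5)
The statement you were asked to address is Theorem~\ref{thm:BaWu} itself, the Baker--W\"ustholz linear-forms-in-logarithms estimate. In the paper this theorem is not proved; it is imported verbatim as ``the main result of \cite{BaW:Log_forms_and_group_varieties}'' and used as a black box. A genuine proof of it would require the full machinery of Baker's method: construction of auxiliary functions, extrapolation, and Philippon--W\"ustholz-type zero estimates on commutative algebraic groups. Nothing of that sort appears in your proposal, so the proposal does not prove the stated theorem.

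What you have actually sketched is a proof of the paper's \emph{main} result, Theorem~\ref{thm:main}, \emph{using} Theorem~\ref{thm:BaWu} as an input. That is a different statement, so the proposal is off-target for the task. For what it is worth, your sketch of the main theorem does follow the same broad architecture as Section~\ref{sec:proof} -- reduction to $K=\Q$ and an order, the $q$-expansion giving $\log|\alpha| \gg \sqrt{|\Delta|}$, an upper bound for a nonvanishing coefficient polynomial evaluated at $\lambda$, a linear-forms lower bound for the same quantity, and finally the interplay between $\varphi(N) \le \delta_2\,h(\Delta)$, the class number bound, and a bootstrap (Lemma~\ref{lemma:auxiliary}). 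But your route to the lower bound differs from the paper's: you factor $F_{\delta_1}$ completely and apply the two-logarithm estimate to each root $\beta_k$ individually, summing the contributions, whereas the paper isolates this step into Lemma~\ref{lemma:lower-bound-poly-root-of-unity}, which is proved by induction on the degree, treating composite polynomials by multiplicativity and, in the irreducible case, singling out the root closest to $\lambda$ (handled by Lemma~\ref{lemma:distance-root-of-unity}) and controlling the remaining roots with a root-separation inequality from \cite{Bugeaud}. The paper's device avoids applying Baker--W\"ustholz more than once per irreducible factor and keeps the $\delta_2$-dependence under control; your ``sum over all roots'' version needs the delicate claim $\sum_k h(\beta_k)\ll h(F)+\log\delta_2$ and a careful treatment of near-coincident roots, which is exactly what the root-separation step in Lemma~\ref{lemma:lower-bound-poly-root-of-unity} is designed to sidestep. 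None of this, however, bears on the actual statement you were given, which remains unaddressed.
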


The following lemma gives us a lower bound on the distance between an algebraic number and a root of unity.
\begin{lemma} \label{lemma:distance-root-of-unity}
Let $\lambda \in \C$ be an $N^{\textrm{th}}$ root of unity, where $N$ is a positive integer.
If $\gamma \in \C$ is algebraic of degree $d$ over $\Q$, and $\lambda \neq \gamma$, then
\[
\log |\lambda - \gamma| > -c d^3 \log(4d) \max\left(h(\gamma), \frac{4}{d} \right) \log \max(N,2)
\]
with $c = 2^{25} 3^3 \pi + 1$.
\end{lemma}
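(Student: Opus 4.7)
The plan is to express $\lambda - \gamma$ in terms of a nonzero linear form in two logarithms of algebraic numbers and then invoke Theorem \ref{thm:BaWu}. The crucial identity is $2 \pi i = 2 \log(-1)$ with $\log(-1) = i \pi$, which turns the transcendental equation $\lambda^N = 1$ into the algebraic--logarithmic equality $N \log \lambda - 2k \log(-1) = 0$ for some integer $k$ (once a branch of $\log$ is fixed). If $|\lambda - \gamma| \geq 1/2$ the conclusion is trivial, so I assume $|\lambda - \gamma| < 1/2$. Write $\lambda = \exp(2 \pi i k / N)$ for the unique integer $k$ with $|k| \leq N/2$, set $z = \gamma / \lambda - 1$ (so $|z| < 1/2$), and define
\[
\log \gamma \;=\; \frac{2 \pi i k}{N} + \log(1+z)
\]
using the principal branch. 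Then $\Lambda := N \log \gamma - 2k \log(-1) = N \log(1+z)$ is nonzero (since $\gamma \neq \lambda$), and $|\Lambda| \leq 2N |\lambda - \gamma|$ by the elementary estimate $|\log(1+z)| \leq 2 |z|$ valid for $|z| \leq 1/2$.

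Now apply Theorem \ref{thm:BaWu} to $\Lambda$ with $n = 2$, $\alpha_1 = \gamma$, $\alpha_2 = -1$, $b_1 = N$, $b_2 = -2k$. Since $|k| \leq N/2$, the Weil height of the coefficient vector $(N : -2k)$ is at most $\log N$, hence $h'(L) \leq \log \max(N, 2)$. Direct computation gives $h'(-1) = \max(0, \pi/d, 1/d) = \pi/d$, and the chosen branch yields $|\log \gamma| \leq \pi + \log 2 < 4$, so $h'(\gamma) \leq \max(h(\gamma), 4/d)$. The relevant constant evaluates to
\[
C(2,d) \;=\; 18 \cdot 3! \cdot 2^{3} \cdot (32 d)^{4} \log(4d) \;=\; 2^{25} \cdot 3^{3} \cdot d^{4} \log(4d),
\]
and the lone $1/d$ in $h'(-1)$ cancels one factor of $d$, so Theorem \ref{thm:BaWu} delivers
\[
\log |\Lambda| \;>\; -\, 2^{25} \cdot 3^{3} \, \pi \, d^{3} \log(4d) \, \max(h(\gamma), 4/d) \, \log \max(N, 2).
\]

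Combining with $|\Lambda| \leq 2 N |\lambda - \gamma|$ contributes an extra $-\log(2N)$ on the right, which one checks is harmlessly absorbed by incrementing the leading constant from $2^{25} \cdot 3^{3} \pi$ to $2^{25} \cdot 3^{3} \pi + 1$ --- exactly the $c$ of the lemma. There is essentially no analytic obstacle; the care lies in (i) choosing the branch of $\log \gamma$ so that $\Lambda$ takes the clean closed form $N \log(1+z)$, (ii) correctly identifying $864 \cdot 32^{4} = 2^{25} \cdot 3^{3}$ when evaluating $C(2,d)$, and (iii) handling the degenerate case $N = 1$ (where $k = 0$ and $\Lambda = \log \gamma$) by the $n = 1$ version of Theorem \ref{thm:BaWu}, which yields a strictly stronger bound.
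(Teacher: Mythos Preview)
Your argument is essentially the paper's own proof: the same linear form $\Lambda = N\log\gamma - 2k\log(-1)$, the same application of Theorem~\ref{thm:BaWu}, and the same arithmetic $18\cdot 6\cdot 8\cdot 32^4 = 2^{25}\cdot 3^3$ followed by absorption of $\log(2N)$ into the ``$+1$'' of the constant. The cosmetic differences (threshold $\tfrac12$ in place of the paper's $\tfrac14$, and the estimate $|\log\gamma|<\pi+\log 2$ in place of $\pi+\tfrac12$) are harmless.

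There is, however, one genuine gap. Theorem~\ref{thm:BaWu} as stated requires $\alpha_1,\alpha_2\in\C\setminus\{0,1\}$, and you never check $\gamma\neq 1$. The case $\gamma=0$ is indeed excluded by $|\lambda-\gamma|<\tfrac12$, but $\gamma=1$ is not: take $d=1$, $\gamma=1$, and $\lambda$ a primitive $N$th root of unity with $N\geq 13$, so that $|\lambda-1|=2\sin(\pi/N)<\tfrac12$. Your invocation of Baker--W\"ustholz then fails outright. The paper avoids this by treating the whole case $d=1$ separately with an elementary argument (if $\lambda=\pm1$ then $|\lambda-\gamma|\geq 1/H(\gamma)$; otherwise $|\lambda-\gamma|\geq|\Impart\lambda|\geq\sin(2\pi/N)\geq 1/N$) before ever appealing to logarithmic forms. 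That detour also sidesteps a second, minor issue with your bound $h'(L)\leq\log\max(N,2)$: when $d=1$ one has $h'(L)=\max(h(L),1)$, which exceeds $\log 2$ for $N\leq 2$. Once you insert the $d=1$ case (or at least the case $\gamma=1$) by hand, your proof coincides with the paper's.
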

\begin{proof}
The right hand side is the same for $N=1$ and for $N=2$, so we may assume that $N \ge 2$.
Suppose first that $d=1$.
Then $\gamma \in \Q$, so there are coprime integers $a,b$ such that $b > 0$ and $\gamma = \frac{a}{b}$.
If $\lambda \in \{1, -1\}$, then $|\lambda - \gamma| = \left|\frac{a \pm b}{b}\right| \ge \frac{1}{b} \ge \frac{1}{H(\gamma)}$, hence $\log |\lambda-\gamma| \ge -h(\gamma)$.
Now let $\lambda \notin \{1, -1\}$, then $N \ge 3$, and $|\lambda - \gamma| \ge |\Impart(\lambda)| \ge \sin(\frac{2 \pi}{N})$.
The sine function is concave in the interval $[0, \pi]$, so $\sin x \ge \frac{\sin(2\pi/3)}{2\pi/3} x = \frac{3\sqrt{3}}{4 \pi} x$ for every $x \in [0, \frac{2\pi}{3}]$.
Thus $|\lambda - \gamma| \ge \frac{3\sqrt{3}}{4 \pi} \cdot \frac{2 \pi}{N} = \frac{3\sqrt{3}}{2N} \ge \frac{1}{N}$, therefore $\log |\lambda - \gamma| \ge - \log N$.
So the statement is true for $d=1$.

From now on we assume that $N, d \ge 2$.
If $|\lambda-\gamma| \ge \frac{1}{4}$, then $\log|\lambda-\gamma| \ge -\log 4$, which is clearly greater than the bound needed.
So we may assume that $|\lambda-\gamma|<\frac{1}{4}$.
Let us define the logarithm function in the unit disc around $1$ such that $\log(1) = 0$, and let $s = \log(\frac{\gamma}{\lambda})$.
Here $s$ is well defined, because $|\frac{\gamma}{\lambda}-1| = |\gamma - \lambda| < \frac{1}{2}$.
It is a basic fact from analysis that if $z \in \C$ and $|z|<\frac{1}{2}$, then $|\log(1+z)| \le 2 |z|$.
Using this for $z = \frac{\gamma}{\lambda}-1$, we get that $|s| \le 2|\frac{\gamma}{\lambda}-1| = 2|\lambda-\gamma|$.
In particular $|s| < \frac{1}{2}$.

We can find an integer $u$ such that $|u| \le \frac{N}{2}$ and $\lambda = e^{\frac{u}{N} 2\pi i}$.
Note that $e^{s + \frac{u}{N} 2\pi i} = \gamma$ and $e^{\pi i} = -1$, so we may choose the logarithms $\log \gamma$ and $\log(-1)$ to be $s + \frac{u}{N} 2\pi i$ and $\pi i$.
Define the linear form $L(z_1,z_2) = N z_1 - 2u z_2$.
We will apply the Baker-W\"ustholz estimate (Theorem \ref{thm:BaWu}) for $\Lambda = L(\log \gamma, \log(-1))$.
Here $\Lambda = N \log \gamma - 2u \pi i = N(s+\frac{u}{N} 2\pi i) - 2u \pi i = N s \neq 0$, because otherwise $\log \gamma = \frac{u}{N} 2\pi i$, so $\gamma = e^{\frac{u}{N} 2\pi i} = \lambda$, contradicting $\lambda \neq \gamma$.
Note that $-1, \gamma \notin \{0,1\}$, because $d \ge 2$.
Theorem \ref{thm:BaWu} tells us that
\[
\log |\Lambda| > -C(2,d) h'(\gamma) h'(-1) h'(L),
\]
where 
\begin{align*}
C(2,d) &= 18 \cdot 6 \cdot 8 (32d)^4 \log(4d), \\
h'(\gamma) &= \max\left(h(\gamma), \frac{1}{d} |\log \gamma|, \frac{1}{d}\right), \\
h'(-1) &= \max\left(h(-1), \frac{1}{d}|\log(-1)|, \frac{1}{d}\right) = \frac{\pi}{d}, \\
h'(L) &= \max\left(h\left(\frac{2u}{N}\right), \frac{1}{d}\right) \le \max\left(\log N, \frac{1}{d}\right) \le \max\left(\log N, \frac{1}{2}\right) = \log N.
\end{align*}
Note that $|\log \gamma| = |s+\frac{2u}{N} \pi i| \le |s| + \pi \le \pi + \frac{1}{2} < 4$, so $h'(\gamma) \le \max(h(\gamma), \frac{4}{d})$.
Collecting these inequalities together, we get
\[
\log |\Lambda| > - c' d^3 \log(4d) \max\left(h(\gamma), \frac{4}{d}\right) \log N,
\]
where $c' = 18 \cdot 6 \cdot 8 \cdot 32^4 \cdot \pi = 2^{25} 3^3 \pi = c-1$.
From $|s| \le 2|\lambda-\gamma|$ and $\Lambda = Ns$ we obtain that $|\lambda-\gamma| \ge \frac{|\Lambda|}{2N}$.
So
\begin{align*}
\log |\lambda-\gamma| &\ge \log|\Lambda| - \log(2N) \ge \log|\Lambda| - 2 \log N \\
&> -(c'+1) d^3 \log(4d) \max\left(h(\gamma), \frac{4}{d}\right) \log N \\
&= - c d^3 \log(4d) \max\left(h(\gamma), \frac{4}{d}\right) \log N.
\end{align*}
\end{proof}

In the following lemma, we get a bound for the value of a polynomial at a root of unity.
\begin{lemma} \label{lemma:lower-bound-poly-root-of-unity}
Let $N$ and $d$ be positive integers.
Let $\lambda \in \C$ be an $N^{\textrm{th}}$ root of unity, and let $g \in \Z[X]$ be a polynomial of degree at most $\delta$ such that $g(\lambda) \neq 0$.
Then
\[
\log |g(\lambda)| > - 2^{35} \delta^2 (\log(4\delta))^2 \max(h(g),1) \log \max(N,2).
\]
\end{lemma}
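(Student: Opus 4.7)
My plan is to reduce the bound on $|g(\lambda)|$ to a single application of Lemma~\ref{lemma:distance-root-of-unity} to the root of $g$ closest to $\lambda$, combined with an elementary Mahler-measure estimate. As a preliminary normalization, replacing $g$ by $g$ divided by the gcd of its coefficients only decreases $|g(\lambda)|$ while leaving $h(g)$ unchanged, so I may assume $g$ is primitive in $\Z[X]$. The case $N=1$ is immediate: then $\lambda=1$ and $g(1)\in\Z\setminus\{0\}$, so $\log|g(1)|\ge 0$.

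I then factor $g(X)=c\prod_{i=1}^{\delta'}(X-\beta_i)$ over $\C$, where $\delta'=\deg g\le\delta$ and $|c|\ge 1$. Two arithmetic inputs drive the estimate: Landau's inequality gives $M(g)\le\sqrt{\delta+1}\,H(g)$, so $\log M(g)\le h(g)+\tfrac{1}{2}\log(\delta+1)$; and for any root $\beta$ of $g$ with primitive minimal polynomial $m_\beta\in\Z[X]$, Gauss's lemma yields $m_\beta\mid g$ in $\Z[X]$, hence $M(m_\beta)\le M(g)$ and therefore $d_\beta\,h(\beta)=\log M(m_\beta)\le\log M(g)$, where $d_\beta=[\Q(\beta):\Q]\le\delta$.

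Picking $\beta$ to be a root of $g$ that minimizes $|\lambda-\beta_i|$ (necessarily $\beta\ne\lambda$ since $g(\lambda)\ne 0$), I apply Lemma~\ref{lemma:distance-root-of-unity} to $\beta$. Rewriting the conclusion as $d_\beta^3\log(4d_\beta)\max(h(\beta),4/d_\beta)=d_\beta^2\log(4d_\beta)\max(d_\beta h(\beta),4)$ and inserting the bound $d_\beta h(\beta)\le h(g)+\tfrac{1}{2}\log(\delta+1)$ produces
\[
\log|\lambda-\beta| > -C_1\,\delta^2(\log(4\delta))^2\max(h(g),1)\,\log\max(N,2)
\]
for an absolute constant $C_1$ of the claimed order of magnitude $2^{35}$.

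The step I expect to be the main obstacle is the passage from this single-root estimate to the desired bound on $\log|g(\lambda)|$. The naive identity $|g(\lambda)|=|c|\prod_i|\lambda-\beta_i|\ge|\lambda-\beta|^{\delta'}$ only gives $\log|g(\lambda)|\ge\delta'\log|\lambda-\beta|$, introducing a superfluous factor of $\delta$ and hence yielding an exponent $\delta^3$ in place of the stated $\delta^2$. To recover the correct exponent one must control the remaining factors $\prod_{i\ne i_0}|\lambda-\beta_i|$ more carefully — for instance by separating the roots of $g$ according to whether they lie near or far from the unit circle (exploiting the Mahler-measure identity $\log|g(\lambda)|=\log M(g)+\sum_i\log\frac{|\lambda-\beta_i|}{\max(1,|\beta_i|)}$ to harvest the far contribution), or by leveraging that $g(\lambda)\in\Z[\lambda]$ is a nonzero algebraic integer so that $N_{\Q(\lambda)/\Q}(g(\lambda))$ is a nonzero integer.
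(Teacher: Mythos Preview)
You have correctly isolated the main difficulty, but your proposal stops short of resolving it: you yourself flag that the passage from the single-root estimate $\log|\lambda-\beta|$ to a bound on $\log|g(\lambda)|$ is missing, and the two fixes you sketch do not work with the claimed exponent. The norm idea gives only $\log|g(\lambda)|\ge -(\varphi(N)-1)\bigl(h(g)+\log(\delta+1)\bigr)$, which depends polynomially on $N$ rather than on $\log N$. The Mahler-measure identity helps only with roots far from the unit circle; for the (possibly many) roots $\beta_i$ close to $\lambda$ on the unit circle it yields nothing, and without further input you are back to the lossy estimate $|g(\lambda)|\ge|\lambda-\beta|^{\delta'}$ and hence a $\delta^3$ in place of $\delta^2$.

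The paper closes this gap by a two-step device you did not anticipate. First, an induction on $\delta$ (working with the Mahler-measure form of the inequality, $\log|g(\lambda)|>-c_2\,\delta^2\log(4\delta)\log(2M(g))\log\max(N,2)$) reduces to the case of \emph{irreducible} $g$: if $g=g_1g_2$ with $\deg g_i=d_i$, multiplicativity of $M$ together with the elementary inequality $d_1^2\log(4d_1)+d_2^2\log(4d_2)\le(d_1+d_2)^2\log(4(d_1+d_2))$ lets the induction go through. Second, once $g$ is irreducible, all roots are algebraically conjugate, and the paper invokes a root-separation estimate (from Bugeaud, of Mahler--Mignotte type): $|\gamma_k-\gamma_i|>\sqrt{3}\,\delta^{-(\delta+2)/2}M(g)^{-(\delta-1)}$ for $i\ne k$, whence each $|\lambda-\gamma_i|$ with $i\ne k$ is at least half this quantity. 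The $(\delta-1)$ ``other'' factors then contribute only lower-order terms absorbed into the constant, and the single application of Lemma~\ref{lemma:distance-root-of-unity} to $\gamma_k$ supplies the dominant $\delta^2\log(4\delta)$ term. Your setup is compatible with this argument, but you need both the induction-to-irreducible step and the root-separation input to complete it.
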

\begin{proof}
We will prove the inequality
\begin{equation} \label{eq:g(lambda)-Mahler-bound}
\log |g(\lambda)| > - c_2 \delta^2 \log(4\delta) \log(2 M(g)) \log \max(N,2),
\end{equation}
where $c_0 = 2^{25} 3^3 \pi + 1$, $c_1 = \frac{4}{\log 2} c_0$ and $c_2 = c_1 + 6$.

We argue by induction on $\delta$.
The right hand side of \eqref{eq:g(lambda)-Mahler-bound} is the same for $N=1$ and for $N=2$, so we may assume that $N \ge 2$.
If $\deg g = 0$, then $g(\lambda) \in \Z \setminus \{0\}$, so $\log |g(\lambda)| \ge 0$.
Now let $\deg g \ge 1$.
The right hand side of \eqref{eq:g(lambda)-Mahler-bound} is a monotone decreasing function of $\delta$, so we may assume that $\deg g = \delta$, and that \eqref{eq:g(lambda)-Mahler-bound} is true for smaller values of $\delta$.
We may also assume that the gcd of the coefficients of $g$ is $1$, because multiplying $g$ by a positive integer increases the left hand side and decreases the right hand side of \eqref{eq:g(lambda)-Mahler-bound}.
Suppose $g$ is not irreducible, then $g = g_1 g_2$ for some polynomials $g_1, g_2 \in \Z[X]$ of positive degrees $d_1$ and $d_2$.
Note that $\delta = d_1 + d_2$ and $M(g) = M(g_1) M(g_2) \ge M(g_1), M(g_2)$.
Using the induction hypothesis for $g_1$ and $g_2$, we get
\begin{align*}
\log |g(\lambda)| &= \log |g_1(\lambda)| + \log |g_2(\lambda)| \\
&\ge - c_2 \log(2 M(g)) (\log N) \left( d_1^2 \log(4d_1) + d_2^2 \log(4d_2) \right) \\
&\ge - c_2 \log(2 M(g)) (\log N) \delta^2 \log(4\delta),
\end{align*}
because
\[
\delta^2 \log(4\delta) = (d_1^2 + d_2^2 + 2d_1 d_2) \log(4\delta) \ge d_1^2 \log(4d_1) + d_2^2 \log(4d_2).
\]

Finally, let $g$ be irreducible.
Let $g = a (X-\gamma_1) \dotsm (X-\gamma_{\delta})$, where $a \in \Z \setminus \{0\}$ and $\gamma_1, \dotsc, \gamma_{\delta} \in \C$.
Choose a $k \in \{1, \dotsc, n\}$ such that $|\lambda - \gamma_k|$ is minimal.
During the proof of Theorem A.3 in \cite{Bugeaud} it is shown that if $P \in \Z[X] \setminus \{0\}$ is separable polynomial of degree $n$, and $\alpha, \beta \in \C$ are such that $P(\alpha) = P(\beta) = 0$ and $\alpha \neq \beta$, then
\[
|\alpha - \beta|^2 \frac{n^3}{3} \max(1,|\alpha|, |\beta|)^{-2} n^{n-1} M(P)^{2n-2} > 1.
\]
So in fact
\[
|\alpha-\beta| > \sqrt{3} n^{-(n+2)/2} \max(1, |\alpha|, |\beta|) M(P)^{-(n-1)} \ge \sqrt{3} n^{-(n+2)/2} M(P)^{-(n-1)}.
\]
Applying this result for $P=g$ and $\alpha = \gamma_k$, $\beta = \gamma_i$ with $i \neq k$, we get that $|\gamma_k - \gamma_i| > R$ with $R = \sqrt{3} \delta^{-(\delta+2)/2} M(g)^{-(\delta-1)}$.
Then
\[
R < |\gamma_k - \gamma_i| \le |\gamma_k - \lambda| + |\lambda - \gamma_i| \le 2 |\lambda - \gamma_i|,
\]
so
\begin{equation} \label{eq:lambda-gammai-bound}
|\lambda - \gamma_i| > \frac{R}{2}
\end{equation}
for every $i \neq k$.
Applying Lemma \ref{lemma:distance-root-of-unity} and using $\log M(g) = \delta h(\gamma_k) \ge 0$, we obtain
\begin{equation} \label{eq:lambda-gammak-bound}
\begin{split}
\log |\lambda - \gamma_k| &> -c_0 \delta^2 \log(4\delta) \max\left(\log M(g), 4 \right) \log N \\
&\ge -c_1 \delta^2 \log(4\delta) \log(2M(g)) \log N.
\end{split}
\end{equation}
Let $A = \delta^2 \log(4\delta) \log(2M(g)) \log N$.
The bounds \eqref{eq:lambda-gammai-bound} and \eqref{eq:lambda-gammak-bound} together imply that
\begin{align*}
\log |g(\gamma)| &> (\delta-1) \log\left(\frac{R}{2}\right) - c_1 A \\
&= -(\delta-1) \log\left(\frac{2}{\sqrt{3}}\right) - \frac{(\delta-1)(\delta+2)}{2} \log \delta - (\delta-1)^2 \log M(g) - c_1 A.
\end{align*}
It is easy to check that the terms
\[
(\delta-1) \log\left(\frac{2}{\sqrt{3}}\right), \quad \frac{(\delta-1)(\delta+2)}{2} \log \delta, \quad (\delta-1)^2 \log M(g)
\]
are all smaller than $2A$, so $\log |g(\gamma)| > -(c_1 + 6) A = - c_2 A$.
This finishes the proof of \eqref{eq:g(lambda)-Mahler-bound}.

To prove the statement of the lemma, first note that we may assume that the gcd of the coefficients of $g$ is $1$.
Then every coefficient of $g$ has euclidean absolute value at most $H(g)$, so $M(g) \le \sqrt{\delta+1} H(g)$ (see Lemma 1.6.7 in \cite{Bombieri-Gubler}), hence
\begin{align*}
\log |g(\lambda)| &> - c_2 \delta^2 \log(4\delta) \log(2 \sqrt{\delta+1} H(g)) \log \max(N,2) \\
&\ge - 2 c_2 \delta^2 \log(4\delta)^2 \max(h(g),1) \log \max(N,2),
\end{align*}
because
\[
\log 2 + \frac{\log(\delta+1)}{2} + h(g) \le 2 \log(4\delta) \max(h(g),1).
\]
The statement of the lemma follows from $2 c_2 < 2^{35}$.
\end{proof}

If $\lambda$ is a primitive $N^{\textrm{th}}$ root of unity, then the degree of $\lambda$ over $\Q$ is $\varphi(N)$, where $\varphi$ denotes Euler's totient function.
During the proof of Theorem \ref{thm:main} we will need a lower bound for the degree of $\lambda$.
A more or less trivial bound would be $\varphi(N) \ge c(\varepsilon) N^{1-\varepsilon}$ for every positive $\varepsilon$.
We can actually do better.
\begin{proposition} \label{prop:lower-bound-phi}
If $N > 30$ is an integer, then $\varphi(N) > \frac{N}{3 \log \log N}$.
\end{proposition}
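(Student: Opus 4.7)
My plan is to reduce the inequality to the case of primorials and then invoke an effective version of Mertens' theorem, which says $\prod_{p \le x}(1-1/p) \sim e^{-\gamma}/\log x$. The point is that $e^{-\gamma} \approx 0.5614$ is comfortably larger than $1/3$, so the asymptotic estimate gives the desired bound with a large multiplicative margin.

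\textbf{Reduction to primorials.} Since $\varphi$ is multiplicative with $\varphi(p^a)/p^a = 1-1/p$, the quantity $\varphi(N)/N$ depends only on the set of primes dividing $N$. Let $p_1 < \cdots < p_k$ be the distinct prime divisors of $N$, let $q_1 < \cdots < q_k$ be the first $k$ primes, and set $P_k = q_1 q_2 \cdots q_k$. Since $q_i \le p_i$, both $P_k \le N$ and
\[
\varphi(P_k)/P_k \;=\; \prod_{i=1}^k\!\left(1-\tfrac{1}{q_i}\right) \;\le\; \prod_{i=1}^k\!\left(1-\tfrac{1}{p_i}\right) \;=\; \varphi(N)/N.
\]
Because $P_k \le N$ also gives $\log\log P_k \le \log\log N$, proving the bound for $P_k$ implies it for $N$.

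\textbf{Small cases.} When $k \le 3$ the primorial lies in $\{2,6,30\}$, so $\varphi(N)/N$ takes one of the fixed values $\tfrac12,\tfrac13,\tfrac{4}{15}$. In each case the desired inequality reduces to a concrete lower bound on $\log\log N$, easily checked for all $N>30$ with that radical (the critical case is radical $30$, where the smallest admissible $N$ is $60$, and $\log\log 60 > 5/4$).

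\textbf{The primorial case $k\ge 4$.} Here $P_k \ge 210$ and I use $\log P_k = \vartheta(p_k)$. An effective form of Mertens' theorem (such as the classical Rosser--Schoenfeld bounds) gives $\prod_{p \le p_k}(1-1/p) \ge (e^{-\gamma}/\log p_k)(1-\epsilon_k)$ with an explicit $\epsilon_k \to 0$, while an effective Chebyshev estimate yields $\log\log P_k = \log \vartheta(p_k) = \log p_k + o(1)$. Combined with $e^{-\gamma} > 1/3$, this settles all $k$ beyond some explicit threshold $k_0$; the finitely many primorials $P_4,\dotsc,P_{k_0-1}$ are handled by direct computation of $\varphi(P_k)/P_k$ and $\log\log P_k$.

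The only real obstacle is the bookkeeping needed to pin down the threshold $k_0$ by combining explicit Mertens and Chebyshev estimates. Since the asymptotic ratio $e^{-\gamma}/(1/3) \approx 1.68$ leaves a generous multiplicative slack, even crude effective forms of these theorems suffice, and the finite numerical verification afterwards is short.
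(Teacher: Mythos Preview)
Your approach is sound in outline but takes a genuinely different route from the paper. The paper simply invokes Theorem~15 of Rosser--Schoenfeld, which already states
\[
\varphi(N) > \frac{N}{e^{\gamma}\log\log N + 2.51/\log\log N}\qquad (N\ge 3),
\]
and then observes in two lines that the denominator is below $3\log\log N$ once $\log\log N > \sqrt{2.51/(3-e^{\gamma})}$, i.e.\ once $N\ge 67$; the range $31\le N\le 66$ is checked directly. Your plan instead rederives a bound of this shape from first principles: reduce to primorials, then combine an effective Mertens product estimate with an effective Chebyshev bound for $\vartheta$. That is essentially how the Rosser--Schoenfeld inequality is proved in the first place, so your route is more self-contained but considerably more work; the paper's citation buys brevity at the cost of a black box.

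Two small points on your write-up. First, in the $k=3$ case the relevant check is not ``smallest $N>30$ with radical $30$'' (which is $60$) but ``smallest $N>30$ with three prime factors'' (which is $42$); your primorial bound $\varphi(N)/N\ge 4/15$ must be tested against the smallest possible $\log\log N$, and $N=42$ gives that. Fortunately $\log\log 42\approx 1.318>5/4$, so the conclusion survives. Second, as you acknowledge, what you have written is a plan rather than a proof: the explicit Mertens and Chebyshev constants, the resulting threshold $k_0$, and the finite verification for $P_4,\dots,P_{k_0-1}$ all remain to be supplied.
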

\begin{proof}
The statement can be easily verified case by case for $31 \le N \le 66$, so we may assume that $N \ge 67$.
Theorem 15 in \cite{Rosser-Schoenfeld:Approx_formulas} implies that
\[
\varphi(N) > \frac{N}{e^{\gamma} \log \log N + \frac{2.51}{\log \log N}}
\]
for $N \ge 3$, with $\gamma$ denoting the Euler constant.
Now $\log \log N \ge \log \log 67 > \sqrt{\frac{2.51}{3-e^{\gamma}}}$, hence $(3-e^{\gamma}) \log \log N > \frac{2.51}{\log \log N}$, therefore
\[
\varphi(N) > \frac{N}{e^{\gamma} \log \log N + \frac{2.51}{\log \log N}} > \frac{N}{3 \log \log N}.
\]
\end{proof}

We will use the following upper bound for the class number $h(D)$.
\begin{proposition} \label{prop:class-number-bound}
If $D<0$ is an integer congruent to $0$ or $1$ modulo $4$, then
\[
h(D) < \frac{1}{\pi} \sqrt{|D|} (2 + \log|D|).
\]
\end{proposition}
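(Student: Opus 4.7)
The plan is to invoke Dirichlet's analytic class number formula together with an elementary bound for $L(1,\chi_D)$. For a negative discriminant $D$ the formula reads
\[
h(D) = \frac{w\sqrt{|D|}}{2\pi}\, L(1,\chi_D),
\]
where $\chi_D(n) = \bigl(\tfrac{D}{n}\bigr)$ is the Kronecker symbol and $w = w(D)$ is the number of units in the corresponding order ($w=6,4,2$ according as $D=-3,-4$ or $D<-4$); see for instance Hua's book, already cited in the preliminaries. Since the target inequality carries the factor $\tfrac{1}{\pi}$ rather than $\tfrac{w}{2\pi}$, the essential task reduces to showing $L(1,\chi_D) < 2+\log|D|$ whenever $D\le -5$, and handling $D=-3,-4$ numerically.

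To control $L(1,\chi_D)$, I would split the Dirichlet series at $M=|D|$:
\[
L(1,\chi_D) = \sum_{n=1}^{|D|} \frac{\chi_D(n)}{n} + \sum_{n>|D|} \frac{\chi_D(n)}{n}.
\]
The head is bounded trivially by $\sum_{n\le|D|} 1/n < 1+\log|D|$. Partial summation with $S(x) = \sum_{n\le x}\chi_D(n)$ rewrites the tail as $-S(|D|)/|D| + \int_{|D|}^{\infty} S(t)\,t^{-2}\,dt$. Because $\chi_D$ is a non-principal character of period dividing $|D|$, the values $\pm 1$ balance over each full period, which forces $|S(x)| \le |D|/2$ uniformly in $x$. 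Hence the boundary term is at most $\tfrac12$ and the integral is at most $(|D|/2)\int_{|D|}^{\infty} t^{-2}\,dt = \tfrac12$, giving $L(1,\chi_D) < 2+\log|D|$. For $D\le -5$ one has $w=2$, and the proposition follows at once. For the two exceptional discriminants, $h(-3)=h(-4)=1$, while the right-hand side evaluates to about $1.71$ and $2.16$ respectively, so the inequality still holds.

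The main obstacle is calibrating the $L$-function estimate so that the constants land exactly on $2+\log|D|$ rather than some looser $O(\log|D|)$; the choice of cutoff $M=|D|$ is what forces the three contributions to come out to $\le 1+\log|D|$, $\le\tfrac12$ and $\le\tfrac12$. A secondary technicality is that for non-fundamental $D$ the Kronecker symbol $\chi_D$ is imprimitive, but both Dirichlet's formula and the character sum bound remain valid in that generality (the conductor of $\chi_D$ divides $|D|$), so the argument proceeds uniformly in $D$.
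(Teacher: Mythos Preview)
Your argument is correct and follows essentially the same route as the paper: both use Dirichlet's class number formula, dispose of $D=-3,-4$ by direct evaluation, and for $D<-4$ reduce to the inequality $L_D(1)<2+\log|D|$. The only difference is that the paper cites this last bound from Hua (Theorem~14.3, Ch.~12), whereas you supply the standard partial-summation proof yourself.
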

\begin{proof}
The class number formula (see Theorem 10.1, Ch.\ 12 in \cite{Hua:Intro_to_Number_Theory}) says that
\[
h(D) = \frac{w \sqrt{|D|}}{2\pi} L_D(1),\]
where $w$ is equal to $6$, $4$ and $2$ for $D=-3$, $D=-4$ and $D<-4$ respectively, and $L_D(1) = \sum_{n=1}^{\infty} \frac{1}{n} (\frac{D}{n})$.
If $D=-3$ or $-4$, then $h(D)=1$, thus the statement of the proposition is true.
So we may assume that $D<-4$.
Then $w=2$, so $h(D) = \frac{\sqrt{|D|}}{\pi} L_D(1)$.
Theorem 14.3, Ch.\ 12 in \cite{Hua:Intro_to_Number_Theory} says that $0 < L_D(1) < 2 + \log |D|$, which gives $h(D) < \frac{\sqrt{|D|}}{\pi}(2 + \log |D|)$.
\end{proof}

We will use the following auxiliary lemma in the proof of Theorem \ref{thm:main}.
\begin{lemma} \label{lemma:auxiliary}
Let $p > 0$, $q > e$ and $A > 10^4$ be real numbers such that
\[
p \le A \log q \qquad \textrm{and} \qquad \frac{q}{\log\log q} \le p \log p.
\]
Then $p < 2 A \log A$ and $q < 3 A (\log A)^2 \log \log A$.
\end{lemma}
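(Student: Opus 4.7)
The plan is to collapse the two hypotheses into a single implicit inequality for $q$, bootstrap that inequality to give $q < q_0 := 3A(\log A)^2 \log\log A$, and then read off the bound on $p$. First, since $q > e$ the hypothesis $q/\log\log q \le p\log p$ forces $p > 1$, so that $x \mapsto x\log x$ is increasing at $p$; substituting the other hypothesis $p \le A\log q$ yields
\[
q \;\le\; p \log p \cdot \log\log q \;\le\; A \log q \cdot (\log A + \log\log q) \cdot \log\log q \;=:\; \psi(q).
\]

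Next, because $\psi(q)$ grows only polylogarithmically while $q$ grows linearly, the ratio $\psi(q)/q$ is eventually strictly decreasing in $q$ and tends to $0$ (a routine derivative check), so to conclude $q < q_0$ it suffices to verify $\psi(q_0) < q_0$. For $A > 10^4$ one has $\log A > 9.2$ and $\log\log A > 2.2$, and direct estimation gives
\[
\log q_0 = \log 3 + \log A + 2\log\log A + \log\log\log A < 1.7 \log A,
\]
whence $\log\log q_0 < 1.25\log\log A$ and $\log A + \log\log q_0 < 1.32\log A$. Multiplying the three estimates together,
\[
\psi(q_0) < (1.7)(1.32)(1.25)\, A(\log A)^2 \log\log A < 2.81\, A(\log A)^2 \log\log A < q_0.
\]
Once $q < q_0$ is established, $\log q < 1.7\log A$ and hence $p \le A\log q < 2A\log A$ follow immediately.

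The main obstacle is numerical rather than structural: the constant $3$ in the conclusion on $q$ leaves very little slack, and the hypothesis $A > 10^4$ is tailored precisely so that lower-order logarithmic terms such as $\log\log\log A / \log A$ become small enough for the product $1.7 \cdot 1.32 \cdot 1.25$ to squeeze under $3$. The eventual monotonicity of $\psi(q)/q$ is standard and is not where the real work lies; the real work is in choosing tight enough intermediate constants so that the numerical estimation closes.
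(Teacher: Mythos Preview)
Your proof is correct and follows essentially the same approach as the paper's: your function $\psi$ coincides with the paper's $G$, and both arguments reduce to showing that $G(x)/x$ is eventually decreasing, verifying $G(q_0) < q_0$ by bounding the three ratios $\frac{\log q_0}{\log A}$, $\frac{\log A + \log\log q_0}{\log A}$, $\frac{\log\log q_0}{\log\log A}$, and then reading off the bound on $p$ from $\log q < c\log A$. The only differences are cosmetic: the paper makes the derivative check explicit (showing $G(x)/x$ decreases for $x > e^4$) and obtains the factors $\tfrac{7}{4}, \tfrac{4}{3}, \tfrac{9}{7}$ via crude power estimates such as $Q < A^{7/4}$, whereas you use the slightly sharper decimals $1.7,\,1.32,\,1.25$ obtained by evaluating at $A = 10^4$ together with monotonicity in $A$.
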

\begin{proof}
Note that $0 < \frac{q}{\log \log q} \le p \log p$, so $p > 1$.
Since $p \log p$ is an increasing function of $p \in (1, \infty)$, we may assume that $p = A \log q$.
Then $\frac{q}{\log \log q} \le A (\log q) \log(A \log q)$, so $q \le G(q)$, where
\[
G(x) = A (\log x) (\log \log x) \log(A \log x).
\]
We claim that $G(x)/x$ is strictly decreasing function of $x$ in the interval $(e^4, \infty)$.
Indeed, if $x > e^4$, then $\log(Ax) \ge \log \log x > 1$, so
\[
G'(x) = \frac{A}{x} ((1+\log \log x)(1+\log(A\log x))-1) < \frac{4A}{x} (\log \log x) \log(A \log x),
\]
hence
\[
(G(x)/x)' = \frac{1}{x^2}(G'(x)x-G(x)) < \frac{A}{x^2} (4 - \log x) (\log \log x) \log(A \log x) < 0.
\]

We claim that $G(Q) < Q$, where $Q = 3 A (\log A)^2 \log \log A$.
This will prove the upper bound on $q$, because $G(Q)/Q < 1 \le G(q)/q$ implies $q < Q$, since $G(x)/x$ is a decreasing function in $(e^4, \infty)$, and $Q > e^4$.
So we need to show
\[
A (\log Q) (\log \log Q) \log(A \log Q) < 3 A (\log A)^2 \log \log A,
\]
or equivalently
\[
\frac{\log Q}{\log A} \cdot \frac{\log(A \log Q)}{\log A} \cdot \frac{\log \log Q}{\log \log A} < 3.
\]
One can easily check that $3 < A^{1/8}$, $\log A < A^{1/4}$ and $\log \log A < A^{1/8}$ for $A > 10^4$.
Thus $3 (\log A)^2 (\log \log A) < A^{3/4}$, hence $Q < A^{7/4}$ and $\frac{\log Q}{\log A} < \frac{7}{4}$.
Then
\[
\log Q < \frac{7}{4} \log A < A^{1/3},
\]
because $\frac{7}{4} < A^{1/12}$ and $\log A < A^{1/4}$ for $A>10^4$.
So $A \log Q < A^{4/3}$ and $\frac{\log(A \log Q)}{\log A} < \frac{4}{3}$.
We have
\[
\log \log Q < \log\left(\frac{7}{4} \log A\right) < \log\left((\log A)^{9/7}\right),
\]
because $\frac{7}{4} < (\log A)^{2/7}$ for $A>10^4$.
So $\frac{\log \log Q}{\log \log A} < \frac{9}{7}$.
This proves $q < Q$, because $\frac{7}{4} \cdot \frac{4}{3} \cdot \frac{9}{7} = 3$.

Finally, we have seen that $q < Q < A^{7/4} < A^2$, so $p = A \log q < 2 A \log A$.
\end{proof}

\section{Proof of Theorem \ref{thm:main}} \label{sec:proof}

One can show in the same way as in the proof of Theorem 2 in \cite{Paulin:Andre-Oort-P1xGm}, that it is enough to prove the statement in the case when $K = \Q$ and $\Z+\Z \tau$ is an order.
So let $K = \Q$ and $\mathcal{O} = \Z + \Z \tau$ an order.
Then $d=1$ and
\[
C = 2^{36} \delta_2^3 (\log(4 \delta_2))^2 \max(h(F),1).
\]

The polynomial $g(Y) = F(\alpha,Y) \in \Q(\alpha)[Y]$ is nonzero, because the zero set of $F$ contains no vertical line.
Moreover $g(\lambda) = 0$, so $[\Q(\alpha,\lambda):\Q(\alpha)] \le \deg g \le \delta_2$.
This gives
\[
\varphi(N) = [\Q(\lambda):\Q] \le [\Q(\alpha,\lambda):\Q] = [\Q(\alpha,\lambda):\Q(\alpha)] \cdot [\Q(\alpha):\Q] \le \delta_2 \cdot h(\mathcal{O}).
\]
The discriminant of the order $\mathcal{O}$ is $\Delta = -4 (\Impart \tau)^2$.
We know that $\Delta<0$ and $\Delta$ is congruent to $0$ or $1$ modulo $4$.
Moreover $h(\mathcal{O}) = h(\Delta)$ (see e.g.\ Theorem 7.7 in \cite{Cox:Primes_of_the_form_x2+ny2}).
Using Proposition \ref{prop:class-number-bound}, we obtain that
\begin{equation} \label{eq:phi(N)-bound}
\varphi(N) \le \delta_2 h(\Delta) \le \frac{\delta_2}{\pi} \sqrt{|\Delta|}(2 + \log|\Delta|).
\end{equation}
Suppose $|\Delta| < 25$.
Then \eqref{eq:main-Q-Delta} is true.
If $N \le 30$, then \eqref{eq:main-Q-N} is also true.
If $N > 30$, then using Proposition \ref{prop:lower-bound-phi}, we obtain
\[
\sqrt{N} < \frac{N}{3\log\log N} < \varphi(N) < \frac{\delta_2}{\pi} 5 (2 + \log 25) < 9 \delta_2,
\]
which implies $N < 81 \delta_2^2$.
This proves \eqref{eq:main-Q-N} if $|\Delta| < 25$.
From now on we assume that $|\Delta| \ge 25$.

Since $|\Delta| \ge 25$, we have $\Impart \tau = \frac{\sqrt{|\Delta|}}{2} \ge \frac{5}{2} > \frac{1}{2\pi} \log 6912$, and from \cite[Prop.\ 3.1]{Paulin:Andre-Oort-P1xGm}, we deduce as in \cite{Paulin:Andre-Oort-P1xGm} that $\frac{|\alpha|}{e^{2\pi \Impart \tau}} \in [\frac{1}{2},2]$.
Taking logarithms leads to $\log|\alpha| \ge 2\pi \Impart \tau - \log 2 > 6 \Impart \tau$, because $\Impart \tau \ge \frac{5}{2} > \frac{\log 2}{2\pi - 6}$.
Substituting $\Impart \tau = \frac{\sqrt{|\Delta|}}{2}$ we obtain
\begin{equation} \label{eq:Delta-log-alpha-bound}
3 \sqrt{|\Delta|} < \log|\alpha|.
\end{equation}

We multiply $F$ by a nonzero rational number, so that $F$ will have integer coefficients with gcd equal to $1$.
Then the maximum of the euclidean absolute values of the coefficients of $F$ is $H(F) = e^{h(F)}$.
Let $F = \sum_{i=0}^{\delta_1} g_i(Y) X^i$, where $g_i(Y) \in \Z[Y]$.
Here each $g_i$ has degree at most $\delta_2$.
Since $F(X,\lambda) \in \C[X]$ is a nonzero polynomial, $g_i(\lambda) \neq 0$ for some $i$.
Let $m$ be the maximal such $i$.
It is proved in \cite{Paulin:Andre-Oort-P1xGm} that
\[
|g_m(\lambda)| < \frac{(\delta_2+1) H(F)}{|\alpha|-1}.
\]
Since $|\Delta| \ge 25$, inequality \eqref{eq:Delta-log-alpha-bound} implies that $\log|\alpha|>15$, hence $|\alpha|>e^{15}>2$.
Thus
\[
\frac{(\delta_2+1) H(F)}{|\alpha|-1} \le \frac{4\delta_2 H(F)}{|\alpha|},
\]
therefore
\begin{equation} \label{eq:gm-lambda-upper-bound}
\log |g_m(\lambda)| < \log(4\delta_2) + h(F) - \log|\alpha|.
\end{equation}
On the other hand, we can apply Lemma \ref{lemma:lower-bound-poly-root-of-unity} for $g_m$ and $\lambda$.
Since $\deg g_m \le \delta_2$, and the coefficients of $g_m$ have euclidean absolute values at most $H(F) = e^{h(F)}$, we have $h(g_m) \le h(F)$, and Lemma \ref{lemma:lower-bound-poly-root-of-unity} says
\begin{equation} \label{eq:gm-lambda-lower-bound}
\log |g_m(\lambda)| > -2^{35} \delta_2^2 (\log(4\delta_2))^2 \max(h(F),1) \log \max(N,2).
\end{equation}
The inequalities \eqref{eq:Delta-log-alpha-bound}, \eqref{eq:gm-lambda-upper-bound} and \eqref{eq:gm-lambda-lower-bound} together imply
\begin{equation} \label{eq:final1}
\begin{split}
3 \sqrt{|\Delta|} &< 2^{35} \delta_2^2 (\log(4\delta_2))^2 \max(h(F),1) \log \max(N,2) + \log(4\delta_2) + h(F) \\
&< (2^{35}+2) \delta_2^2 (\log(4\delta_2))^2 \max(h(F),1) \log \max(N,2).
\end{split}
\end{equation}
If $N \le 30$, then we get
\[
\sqrt{|\Delta|} < 2^{36} \delta_2^2 (\log(4\delta_2))^2 \max(h(F),1) = \frac{1}{\delta_2} C < \frac{1}{\delta_2} C \log C,
\]
hence both \eqref{eq:main-Q-Delta} and \eqref{eq:main-Q-N} are true.
From now on we assume that $N > 30$.

Applying \eqref{eq:phi(N)-bound} and Proposition \ref{prop:lower-bound-phi}, we get
\begin{equation} \label{eq:final2}
\frac{N}{3\log\log N} < \frac{\delta_2}{\pi} \sqrt{|\Delta|}(2 + \log|\Delta|).
\end{equation}
Let $p = \frac{12}{\pi} \delta_2 \sqrt{|\Delta|}$ and $q = 2 N$, then \eqref{eq:final1} and \eqref{eq:final2} imply
\[
p < A \log q \qquad \textrm{and} \qquad
\frac{q}{\log\log q} < p \log p
\]
with $A = \frac{4}{\pi} (2^{35}+2) \delta_2^3 (\log(4\delta_2))^2 \max(h(F),1)$.
Applying Lemma \ref{lemma:auxiliary}, we obtain
\[
p < 2 A \log A \qquad \textrm{and} \qquad q < 3 A (\log A)^2 \log \log A.
\]
Then $\sqrt{|\Delta|} < \frac{\pi}{6 \delta_2} A \log A$ and $N < \frac{3}{2} A (\log A)^2 \log \log A$.
The inequalities \eqref{eq:main-Q-Delta} and \eqref{eq:main-Q-N} follow from these, because $\frac{\pi}{6} A < A < \frac{3}{2} A < C$.

\subsection*{Acknowledgements}
This paper has its origins in the author's Ph.D.\ studies under the supervision of Gisbert W\"ustholz at ETH Z\"urich.
Therefore the author thanks Gisbert W\"ustholz for introducing him to this field, and for all the helpful discussions.

\bibliographystyle{hplain}
\bibliography{Andre-Oort-P1xGm-log-forms}

\end{document}